\newtheorem{theorem}{Theorem}[section]
\newtheorem{lemma}[theorem]{Lemma}
\newtheorem{corollary}[theorem]{Corollary}
\theoremstyle{definition}
\newtheorem{definition}[theorem]{Definition}
\theoremstyle{remark}
\newtheorem{remark}[theorem]{Remark}
\numberwithin{equation}{section}
\begin{document}

\title{A note on the Nielsen realization problem for K3 surfaces}
\author{David Baraglia}
\address{School of Mathematical Sciences, The University of Adelaide, Adelaide SA 5005, Australia}
\email{david.baraglia@adelaide.edu.au}

\author{Hokuto Konno}
\address{RIKEN iTHEMS, Wako, Saitama 351-0198, Japan}
\email{hokuto.konno@riken.jp}

\begin{abstract}
We will show the following three theorems on the diffeomorphism and homeomorphism groups of a $K3$ surface.
The first theorem is that the natural map $\pi_{0}(Diff(K3)) \to Aut(H^{2}(K3;\mathbb{Z}))$ has a section over its image.
The second is that, there exists a subgroup $G$ of $\pi_{0}(Diff(K3))$ of order two over which there is no splitting of the map $Diff(K3) \to \pi_{0}(Diff(K3))$, but there is a splitting of $Homeo(K3) \to \pi_{0}(Homeo(K3))$ over the image of $G$ in $\pi_{0}(Homeo(K3))$, which is non-trivial.
The third is that the map $\pi_{1}(Diff(K3)) \to \pi_{1}(Homeo(K3))$ is not surjective.
Our proof of these results is based on Seiberg-Witten theory and the global Torelli theorem for $K3$ surfaces.
\end{abstract}


\date{\today}



\maketitle


\section{Introduction}

In this paper, we shall show several theorems on the diffeomorphism and homeomorphism groups of a $K3$ surface combining results obtained from Seiberg-Witten theory and from the global Torelli theorem for $K3$ surfaces.

We denote by $X$ the underlying smooth $4$-manifold of a $K3$ surface, $Diff(X)$ the group of diffeomorphisms with the $\mathcal{C}^\infty$-topology and $Mod(X) = \pi_0(Diff(X))$ the mapping class group. Let $L$ denote the lattice $H^2(X ; \mathbb{Z})$ with its natural intersection pairing and $Aut(L)$ the group of automorphisms of $L$. Let $\Gamma \subset Aut(L)$ denote the image of $Mod(X)$ in $Aut(L)$. By a result of Kreck \cite{kre}, $\Gamma$ is the group of pseudo-isotopy classes of diffeomorphisms of $X$. It is known that $\Gamma$ is the index two subgroup of automorphisms of $L$ which preserve orientation on $H^+(X)$ \cite{mat, don}. From the definition of $\Gamma$ we have a surjective map
\[
p : Mod(X) \to \Gamma.
\]
Let $Homeo(X)$ be the group of homeomorphisms of $X$ with the $\mathcal{C}^0$-topology. By the work of Freedman and Quinn \cite{fre, qui}, it is known that the natural map $\pi_0(Homeo(X)) \to Aut(L)$ is an isomorphism. The groups that we have just introduced are related to one another by a commutative diagram:
\begin{equation*}\xymatrix{
Diff(X) \ar[d]^-i \ar[r] & Mod(X) \ar[r]^-p \ar[d] & \Gamma \ar[dl] \\
Homeo(X) \ar[r] & Aut(L) & 
}
\end{equation*}

In this note we prove the following theorems concerning these groups, related to the Nielsen realization problem for $K3$ surfaces.

\begin{theorem}\label{thm:section}
There exists a splitting $s : \Gamma \to Mod(X)$ of $p$.
\end{theorem}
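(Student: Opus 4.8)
The plan is to realise $\Gamma$ inside $Mod(X)$ as the stabiliser of a path-component of the Teichm\"uller space of hyperk\"ahler metrics on $X$, the structure of that Teichm\"uller space being controlled by the global Torelli theorem. Let $\mathcal{R}(X)$ be the space of hyperk\"ahler (equivalently, Einstein) metrics of fixed volume on $X$, with the $\mathcal{C}^\infty$-topology, and set $\mathcal{T}(X) = \mathcal{R}(X)/Diff_0(X)$; then $Mod(X)$ acts on $\mathcal{T}(X)$, and $Diff_0(X)$ acts freely on $\mathcal{R}(X)$ since the isometry group of a hyperk\"ahler metric on $X$ is finite and acts faithfully on $H^2(X;\mathbb{Z})$.

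First I would introduce the period map. Assigning to a metric $g \in \mathcal{R}(X)$ the positive-definite $3$-plane $P_g \subseteq H^2(X;\mathbb{R})$ spanned by its self-dual harmonic $2$-forms gives a $Mod(X)$-equivariant map
\[
\mathcal{P} : \mathcal{T}(X) \longrightarrow \mathcal{Z},
\]
where $\mathcal{Z}$ is the space of positive-definite $3$-planes in the quadratic space $H^2(X;\mathbb{R})$ (of signature $(3,19)$), with the planes lying in $v^{\perp}$ for some $v$ of square $-2$ deleted, and where $Mod(X)$ acts on $\mathcal{Z}$ through $p$ and the natural action of $\Gamma \subseteq Aut(L)$. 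The essential input is the global Torelli theorem for $K3$ surfaces together with Yau's theorem, which give: (i) every $3$-plane in $\mathcal{Z}$ occurs as a period, so $\mathcal{P}$ is onto; and (ii) two metrics in $\mathcal{R}(X)$ are carried into one another by a diffeomorphism realising a prescribed isometry of $H^2(X;\mathbb{Z})$ precisely when that isometry takes one period to the other. A short computation with (ii) identifies the fibres of $\mathcal{P}$ with torsors under the Torelli subgroup $\ker(p) \subseteq Mod(X)$ and shows that $\mathcal{P}$ is a covering map with deck group $\ker(p)$ acting on each fibre by translation: an isometry of $H^2(X;\mathbb{R})$ fixing every positive-definite $3$-plane must equal $\pm 1$, and $-1 \notin \Gamma$, so the mapping classes acting trivially on $\mathcal{Z}$ are exactly those of $\ker(p)$.

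Next I would use that $\mathcal{Z}$ is simply connected: the Grassmannian of positive-definite $3$-planes in $H^2(X;\mathbb{R})$ is diffeomorphic to $\mathbb{R}^{57}$, from which we have removed only a locally finite union of submanifolds of codimension $3$. Hence the covering $\mathcal{P}$ is trivial, so $Mod(X)$ acts on the set $\pi_0(\mathcal{T}(X))$, which is a torsor under $\ker(p)$ with $\ker(p)$ acting by deck transformations. I would then fix a component $C_0 \in \pi_0(\mathcal{T}(X))$ and take $H = \mathrm{Stab}_{Mod(X)}(C_0)$: freeness of the $\ker(p)$-action gives $H \cap \ker(p) = 1$, so $p|_H$ is injective, and transitivity of that action shows that any $\gamma \in \Gamma$, lifted to $Mod(X)$ and then corrected by a suitable element of $\ker(p)$, has a representative in $H$, so $p|_H$ is onto. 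Therefore $p|_H : H \xrightarrow{\ \sim\ } \Gamma$, and $s = (p|_H)^{-1} : \Gamma \to Mod(X)$ is the desired section.

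The main obstacle I anticipate is step (ii) and the foundational facts it rests on: that the $Diff(X)$-action on the space of Einstein metrics admits a slice, so that $\mathcal{T}(X)$ is a well-behaved space and $\mathcal{P}$ is genuinely a covering map, and that the refined global Torelli theorem pins the deck group down to $\ker(p)$ exactly, including at the metrics with non-trivial isometry groups. Once this is in place, the simple connectivity of $\mathcal{Z}$ and the extraction of the section are purely formal.
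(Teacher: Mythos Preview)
Your proposal is correct and follows essentially the same strategy as the paper: pass to a Teichm\"uller space of Einstein metrics, use the period map to the root-complement $\mathcal{Z}$ in the Grassmannian of positive $3$-planes, invoke the global Torelli theorem to control its fibres, and exploit the simple-connectivity of $\mathcal{Z}$. The paper quotients $Ein$ by the full Torelli group $TDiff(X)$ rather than by $Diff_0(X)$ and extracts the section via $\pi_1$ of the homotopy quotient $Ein \times_{Diff(X)} EDiff(X)$ rather than as the stabiliser of a component, but these are equivalent packagings of the same argument and produce the same section.
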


Theorem~\ref{thm:section} is shown using the global Torelli theorem.
Our usage of the global Torelli theorem is basically due to work of Giansiracusa~\cite{gia} and of Giansiracusa, Kupers, and Tshishiku~\cite{gkt}.

Combing Theorem~\ref{thm:section} with the adjunction inequality, which is an input from Seiberg-Witten theory, we shall give the negative answer to the $4$-dimensional Nielsen realization problem.
The original Nielsen realization problem asks whether every finite subgroup of the mapping class group of an oriented closed surface can be realized as a subgroup of the diffeomorphism group.
Kerckhoff~\cite{ker} solved this original problem in the affirmative.
The following Theorem~\ref{thm:nielsen} tells that the analogous statement in dimension $4$ does not hold in general:

\begin{theorem}\label{thm:nielsen}
There is a subgroup of $Mod(X)$ of order $2$ which does not lift to a subgroup of order $2$ in $Diff(X)$. However, the image of this subgroup in $Aut(L)$, which is non-trivial, lifts to a subgroup of order $2$ in $Homeo(X)$.
\end{theorem}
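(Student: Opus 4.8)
The plan is to feed the splitting $s : \Gamma \to Mod(X)$ of Theorem~\ref{thm:section} into an obstruction coming from Seiberg--Witten theory. Fix once and for all an element $g \in \Gamma$ of order $2$, to be specified below, and set $G := \langle s(g) \rangle \subset Mod(X)$. Since $s$ is a homomorphism and $p \circ s = \mathrm{id}_\Gamma$, the class $s(g)$ has order exactly $2$, so $G$ has order $2$ and its image in $Aut(L)$ is $\langle g \rangle$, which is non-trivial. A lift of $G$ to a subgroup of order $2$ in $Diff(X)$ would be generated by a diffeomorphism $f$ with $f^2 = \mathrm{id}$ and $[f] = s(g)$ in $Mod(X)$, hence with $f_* = g$ on $L$; so it is enough to exhibit $g$ such that (i) no smooth involution of $X$ induces $g$ on $L$, while (ii) some involution in $Homeo(X)$ does.

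For (i) the input from Seiberg--Witten theory is the adjunction inequality for $X$ (a consequence of the non-vanishing of the Seiberg--Witten invariant of the spin$^c$ structure with $c_1 = 0$): an embedded surface $\Sigma \subset X$ with $[\Sigma] \neq 0$ and $[\Sigma]^2 \geq 0$ satisfies $2g(\Sigma) - 2 \geq [\Sigma]^2$. Suppose $f$ is a smooth involution with $f_* = g$. Averaging a metric we may assume $f$ is an isometry, so $\mathrm{Fix}(f)$ is the disjoint union of a finite set $F_0$ and a closed surface $F_2$ (on whose normal bundle $f$ acts by $-1$), and $[F_2]$ lies in the $(+1)$-eigenlattice of $g$. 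The Lefschetz fixed point formula and the $G$-signature theorem express $\chi(F_0) + \chi(F_2)$ and $[F_2]^2 = \mathrm{sign}(f,X)$ purely in terms of $g$; I would choose $g$ so that these identities, together with the constraint that $[F_2]$ is invariant, force $F_2$ to contain a component violating the adjunction inequality. This dictates choosing $g$ whose invariant sublattice is small and positive-definite — for instance of rank one, generated by a class of square $2$ — so that the adjunction inequality can bite. The delicate point, and the main obstacle, is to carry this out so that the argument cannot be defeated by a non-transverse Seiberg--Witten moduli space for the $f$-invariant metric; in practice this forces one to run the argument through the families Seiberg--Witten (or Bauer--Furuta) invariant of the Borel family $X \times_{\mathbb{Z}/2} E\mathbb{Z}/2$, and to choose $g$ so that it is genuinely distinct from the smoothly realizable finite-order symmetries of $K3$ classified by Nikulin (in particular from the Enriques involution, which is itself a smoothly realized order-$2$ element of $\Gamma$).

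For (ii), since $Aut(L) = \pi_0(Homeo(X))$ by Freedman's theorem, some homeomorphism induces $g$; to promote it to an involution I would either build a locally linear topological involution directly from a linear model on a neighborhood of a prospective fixed set together with Freedman's classification of the resulting simply connected complement, or apply realization theorems for locally linear finite group actions on simply connected $4$-manifolds in the spirit of Edmonds and Edmonds--Ewing, after checking that the Lefschetz number and $G$-signature forced by $g$ (equivalently the Euler characteristic and signature of the would-be quotient) are compatible with some admissible fixed point configuration. The obstructions relevant here are homological, whereas the obstruction in (i) is gauge-theoretic and smoothness-sensitive, so the two requirements are not in conflict; what remains is to verify (ii) explicitly for whatever $g$ is pinned down in step (i).
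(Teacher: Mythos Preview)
Your overall strategy --- use the section $s$ to produce an order-$2$ class in $Mod(X)$, then rule out a smooth involution by combining the $G$-signature theorem, a fixed-point analysis, and the adjunction inequality --- is exactly the paper's approach. But the proposal stops short of a proof in two places, and takes one unnecessary detour.

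First, you never pin down $g$. The paper does this in the most economical way: take the swap $(x,y)\mapsto(y,x)$ on each $S^2\times S^2$ summand of $3(S^2\times S^2)\#2(-E_8)$ and let it interchange the two $-E_8$ blocks. This is already a continuous involution, so part (ii) is free --- no appeal to Edmonds--Ewing realization is needed. The induced $\phi\in\Gamma$ has $b_+^{\mathbb{Z}_2}=3$, $b_-^{\mathbb{Z}_2}=8$, and integral representation type $(t,c,r)=(0,0,11)$ in Edmonds' notation. Your suggestion to aim for an invariant sublattice that is rank-one positive definite is a different (and harder) target; the paper's invariant lattice has rank $11$ and signature $(3,8)$, and what matters is not that the invariant lattice is small but that $t=c=0$.

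Second, the fixed-point analysis in the paper is sharper than ``Lefschetz plus $G$-signature.'' Using Edmonds' results, one first checks the involution is odd (even would force $\sigma^{\mathbb{Z}_2}=-8$, not $-5$), so the fixed set is a union of orientable surfaces $\Sigma_1,\dots,\Sigma_k$; then Edmonds' identities $t=2k-2$ and $c=2\sum g_i$ with $(t,c)=(0,0)$ force $k=1$ and $g_1=0$. Now the $G$-signature formula gives $[\Sigma_1]^2=6$, and a genus-$0$ surface of positive square in $K3$ violates the ordinary adjunction inequality.

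Finally, the worry about ``non-transverse moduli for the $f$-invariant metric'' and the proposed passage to families Seiberg--Witten on the Borel construction are unnecessary. The adjunction inequality is applied to the embedded sphere $\Sigma_1$ in $X$ itself, with no equivariance required; this is the only place smoothness is used.
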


This gives an example where the smooth Nielsen realization problem for $K3$ can not be solved, but the corresponding continuous Nielsen realization problem can. A completely different example where the smooth Nielsen-type realization problem for $K3$ can not be solved was recently constructed by Giansiracusa, Kupers and Tshishiku \cite{gkt}. In their example, non-realizability is demonstrated using non-vanishing of certain generalized Miller-Morita-Mumford classes. These are rational cohomology classes on $BDiff(X)$ and it follows that they can not be used to detect failure to lift a {\em finite} subgroup of $Mod(X)$ to $Diff(X)$. Thus Theorem \ref{thm:nielsen} does not follow from the constructions of \cite{gkt}.

The last result in this paper is a comparison between $Diff(X)$ and $Homeo(X)$.
As preceding results, Donaldson~\cite{don} showed that the map $i_{\ast} : \pi_{0}(Diff(X)) \to \pi_{0}(Homeo(X))$ induced from the inclusion $i : Diff(X) \to Homeo(X)$ is not surjective.
In \cite{bako}, the authors proved that at least one of the following two statements is true: $\pi_{0}(Diff(X)) \to \pi_{0}(Homeo(X))$ is not injective, or $\pi_{1}(Diff(X)) \to \pi_{1}(Homeo(X))$ is not surjective.
Ruberman showed in ~\cite{rub1} that $\pi_{0}(Diff(M)) \to \pi_{0}(Homeo(M))$ is not injective for some $4$-manifolds $M$, and this was generalized to other $4$-manifolds in \cite{bako1} by the authors.
However, these results can not be applied to $M=X$, a $K3$ surface.
All of these results are based on gauge theory.
In particular, the authors' result in \cite{bako} is a consequence of Seiberg-Witten theory for families.
In this paper, combining such a gauge-theoretic result in \cite{bako} with an input from the global Torelli theorem, we show:

\begin{theorem}\label{thm:nonsmooth}
The induced map
\[
i_* : \pi_1(Diff(X)) \to \pi_1(Homeo(X))
\]
is not surjective.
\end{theorem}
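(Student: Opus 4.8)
The plan is to exhibit an explicit element of $\pi_1(Homeo(X))$ that is not in the image of $i_*$, built from the torsion phenomenon behind Theorem~\ref{thm:nielsen} and controlled by the families Seiberg--Witten machinery of \cite{bako} (the machinery underlying the dichotomy recalled above).

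First, let $G = \langle g \rangle \leq Mod(X)$ be the order two subgroup of Theorem~\ref{thm:nielsen}, put $\bar g = p(g) \in \Gamma$, and let $\phi \in Homeo(X)$ be an involution with $[\phi] = \bar g$ in $\pi_0(Homeo(X)) = Aut(L)$, as provided by Theorem~\ref{thm:nielsen}. Choose $f \in Diff(X)$ representing $g$. Since $g^2 = 1$, the diffeomorphism $f^2$ lies in the identity component $Diff_0(X)$, so there is a smooth path $d$ from $f^2$ to $\operatorname{id}$ in $Diff(X)$; and since $[f]$ and $[\phi]$ agree in $\pi_0(Homeo(X))$, there is a path $h$ from $f$ to $\phi$ in $Homeo(X)$. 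Using that $Homeo(X)$ is a topological group, $q(t) := h(t) \circ h(t)$ is a path from $f^2$ to $\phi^2 = \operatorname{id}$, and
\[
\ell \;:=\; \bigl(d * \overline{q}\bigr)\cdot f^{-2}
\]
is a loop in $Homeo(X)$ based at $\operatorname{id}$, where $*$ denotes concatenation and $\overline{q}$ is $q$ reversed. Changing $d$ by a loop in $Diff_0(X)$ changes $[\ell]$ by an element of $\operatorname{im}(i_*)$, so the class of $[\ell]$ in $\pi_1(Homeo(X))/\operatorname{im}(i_*)$ depends only on $h$.

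Next I would prove $[\ell] \notin \operatorname{im}(i_*)$ by contradiction. Write $\mathbb{RP}^2 = S^1 \cup e^2$ with the $2$-cell attached along a degree two map. The pair $(f,d)$ determines a smooth $X$-bundle over $\mathbb{RP}^2$: the mapping torus of $f$ over $S^1 = \mathbb{RP}^1$, with $d$ providing the extension over the $2$-cell. The condition $[\ell] \in \operatorname{im}(i_*)$ says exactly that, after correcting $d$ by a smooth loop, this bundle becomes topologically isomorphic to the restriction over $\mathbb{RP}^2$ of the flat $X$-bundle $E_\phi \to B\mathbb{Z}/2 = \mathbb{RP}^\infty$ determined by the involution $\phi$; here one uses the path $h$ both to identify the mapping torus of $f$ with that of $\phi$ over $\mathbb{RP}^1$ and to match the extension data over the $2$-cell with $q$. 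Feeding this agreement into the families Seiberg--Witten analysis of \cite{bako} — which is precisely what controls the obstruction to lifting $g$ to an order two subgroup of $Diff(X)$, and which shows that this obstruction is carried by the single $\pi_1$-level term we have just arranged to vanish — one concludes that $g$ would lift to an order two subgroup of $Diff(X)$, contradicting Theorem~\ref{thm:nielsen}. Hence $[\ell]$ represents a class of $\pi_1(Homeo(X))$ outside $\operatorname{im}(i_*)$, which proves the theorem.

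The main obstacle is this last step: setting up the obstruction-theoretic dictionary carefully enough that the passage from $[\ell] \in \operatorname{im}(i_*)$ to ``$g$ lifts to an order two subgroup of $Diff(X)$'' uses nothing beyond elementary bundle theory, Theorem~\ref{thm:section} (already built into Theorem~\ref{thm:nielsen}), and the families Seiberg--Witten vanishing from \cite{bako}; in particular one must check that no higher obstruction over $\mathbb{RP}^{3}, \mathbb{RP}^{4}, \dots$ survives. By contrast, the construction of $\ell$, its well-definedness modulo $\operatorname{im}(i_*)$, and its identification with the flat bundle $E_\phi$ are formal.
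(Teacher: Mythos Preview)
Your argument has a genuine gap at precisely the step you flag as the main obstacle, and it cannot be filled in the way you suggest. You assert that the families Seiberg--Witten analysis of \cite{bako} ``is precisely what controls the obstruction to lifting $g$ to an order two subgroup of $Diff(X)$'' and that this obstruction ``is carried by the single $\pi_1$-level term.'' Neither claim holds. The proof of Theorem~\ref{thm:nielsen} does not use \cite{bako} at all: it applies the $G$-signature theorem and the classical adjunction inequality to the fixed-point sphere of a putative smooth involution. Nothing in that argument, or in \cite{bako}, localises the Nielsen obstruction to a class over $\mathbb{RP}^2$. Concretely, even granting that $[\ell]\in\operatorname{im}(i_*)$ forces $E_\phi|_{\mathbb{RP}^2}$ to be smoothable, you would still need to (i) extend the smoothing over all of $\mathbb{RP}^\infty$, which requires killing obstructions in $H^{k+1}(\mathbb{RP}^\infty;\pi_k(Homeo(X)/Diff(X)))$ for every $k\ge 2$, and then (ii) pass from a smooth $X$-bundle over $B\mathbb{Z}/2$ to an actual involution in $Diff(X)$. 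Neither step is supplied by \cite{bako}, and (ii) is a serious rigidity statement in its own right.

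The paper's proof bypasses Theorem~\ref{thm:nielsen} entirely and works over $T^2$ rather than $\mathbb{RP}^2$. From \cite{bako} one has a non-smoothable continuous family $E\to T^2$ with commuting monodromies $\rho_1,\rho_2\in\Gamma$. The section $s:\Gamma\to Mod(X)$ of Theorem~\ref{thm:section} then produces commuting mapping classes $s(\rho_i)$, hence a \emph{smooth} family $E'\to T^2$ with the same monodromies in $\pi_0(Homeo(X))$. The bundles $E,E'$ agree over the $1$-skeleton but are not isomorphic (one is smoothable, the other is not), so they differ by a nonzero class $\mathcal{O}\in H^2(T^2;\pi_1(Homeo(X)))$ whose image in $H^2(T^2;\pi_1(Q))$, with $Q$ the homotopy fibre of $BDiff(X)\to BHomeo(X)$, is the nonzero smoothing obstruction for $E$. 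Since the map $\pi_1(Homeo(X))\to\pi_1(Q)$ has kernel exactly $\operatorname{im}(i_*)$, non-surjectivity of $i_*$ follows. Thus the Torelli input needed is Theorem~\ref{thm:section}, not Theorem~\ref{thm:nielsen}.
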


\begin{corollary}
The group $\pi_1(Homeo(X))$ is non-trivial.
\end{corollary}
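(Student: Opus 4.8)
The Corollary follows formally from Theorem~\ref{thm:nonsmooth}: if $\pi_1(Homeo(X))$ were the trivial group, then every homomorphism with target $\pi_1(Homeo(X))$ --- in particular the map $i_* : \pi_1(Diff(X)) \to \pi_1(Homeo(X))$ --- would automatically be surjective, contradicting Theorem~\ref{thm:nonsmooth}. Hence $\pi_1(Homeo(X))$ is non-trivial, and there is nothing more to do at this step. All of the content is thus in Theorem~\ref{thm:nonsmooth}, so I record how I would organize that argument, since it is the genuine source of the non-triviality.

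The starting point is the dichotomy established in \cite{bako}: either $i_* : \pi_0(Diff(X)) \to \pi_0(Homeo(X))$ fails to be injective, or $i_* : \pi_1(Diff(X)) \to \pi_1(Homeo(X))$ fails to be surjective. It is convenient to organize both possibilities via the homotopy exact sequence of $Diff(X) \to Homeo(X)$: \cite{bako} produces, using a non-vanishing families Seiberg--Witten invariant, a non-trivial class in $\pi_0$ of the homotopy fiber, and such a class witnesses the first alternative precisely when its image in $\pi_0(Diff(X))$ is non-trivial and witnesses the second alternative otherwise. The plan for Theorem~\ref{thm:nonsmooth} is therefore to run this dichotomy and then exclude the first alternative by showing that the class detected in \cite{bako} has trivial image in $\pi_0(Diff(X)) = Mod(X)$; the second alternative, which is exactly Theorem~\ref{thm:nonsmooth}, then must hold.

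Excluding the first alternative is where the global Torelli theorem enters, through Theorem~\ref{thm:section}. The section $s : \Gamma \to Mod(X)$ of $p$, obtained from the global Torelli theorem in the spirit of \cite{gia} and \cite{gkt}, exhibits $Mod(X)$ as a semidirect product $\ker(p) \rtimes \Gamma$ and lets one compare a given mapping class with the canonical lift of its image in $\Gamma$; combined with a sufficiently explicit description of the family underlying the \cite{bako} class, one wants to conclude that this class is already trivial in $Mod(X)$, rather than an undetected non-trivial element of $\ker(p)$. I expect this matching to be the main obstacle: one has to make the gauge-theoretic construction of \cite{bako} concrete enough, and retain control of it under the Torelli-theoretic section, for the two inputs to be combined; once that is done, Theorem~\ref{thm:nonsmooth}, and hence the Corollary, follow.
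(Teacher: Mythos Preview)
Your first paragraph is exactly right: the Corollary is immediate from Theorem~\ref{thm:nonsmooth}, and the paper likewise states it without proof.

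Your outline of Theorem~\ref{thm:nonsmooth} shares the paper's two ingredients (the non-smoothable family from \cite{bako} and the section $s$ of Theorem~\ref{thm:section}) but combines them differently. You frame the argument as producing a class in $\pi_0$ of the homotopy fiber of $Diff(X)\to Homeo(X)$ and then using $s$ to show its image in $Mod(X)$ vanishes, thereby excluding the $\pi_0$-non-injectivity branch of the dichotomy. The paper instead works with families over $T^2$: the section $s$ is used \emph{constructively}, to lift the commuting automorphisms $\rho_i=(f_i)_*\in\Gamma$ to commuting mapping classes $s(\rho_i)\in Mod(X)$ and hence to build a \emph{smoothable} family $E'\to T^2$ with the same monodromy in $Aut(L)=\pi_0(Homeo(X))$ as the non-smoothable family $E$ of \cite{bako}. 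The two families then agree over the $1$-skeleton and differ by a class $\mathcal{O}\in H^2(T^2;\pi_1(Homeo(X)))$ whose image in $H^2(T^2;\pi_1(Q))$ (with $Q$ the homotopy fiber of $BDiff(X)\to BHomeo(X)$) is the non-vanishing smoothing obstruction for $E$; this forces $\pi_1(Homeo(X))\to\pi_1(Q)$ to be non-zero and hence $i_*$ on $\pi_1$ to be non-surjective. The paper's route sidesteps the step you flag as the main obstacle --- it never isolates a specific element of $\pi_1(Q)$ nor computes an image in $Mod(X)$ --- because the comparison family $E'$ does that bookkeeping implicitly at the level of obstruction classes.
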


\noindent{\bf Acknowledgments}.
D. Baraglia was financially supported by the Australian Research Council Discovery Project DP170101054.
H. Konno was supported by JSPS Grant-in-Aid for Scientific Research on Innovative Areas ``Discrete Geometric Analysis for Materials Design" (Grant Number 17H06461).

\section{Einstein metrics on $K3$}

To prove Theorem \ref{thm:section} we need to recall some facts concerning the Torelli theorem for $K3$ surfaces \cite{bura, loo}. Let $I$ be a complex structure on $X$ with trivial canonical bundle so that $(X,I)$ is a complex $K3$ surface. Then $H^{2,0}(X,I)$ is a $1$-dimensional subspace of $H^2(X ; \mathbb{C})$. Suppose that $z$ spans $H^{2,0}(X,I)$. From Hodge theory it is known that $\langle z , z \rangle = 0$ and $\langle z , \overline{z} \rangle > 0$. Writing $z = x + iy$, where $x,y \in H^2(X ; \mathbb{R})$ one has $\langle x , x \rangle = \langle y , y \rangle > 0$ and $\langle x , y \rangle = 0$. Let $P_{(X,I)} = \mathbb{R}x + \mathbb{R}y$ be the span of $x$ and $y$. Then $P_{(X,I)}$ is a positive definite, oriented $2$-plane in $H^2(X ; \mathbb{R})$. Define the root system of $(X,I)$ to be
\[
\Delta_{(X,I)} = \{ \delta \in H^2(X ; \mathbb{Z}) \; | \; \delta \perp P_{(X,I)}, \; \; \langle \delta , \delta \rangle = -2 \}.
\]
It is known that for each $\delta \in \Delta_{(X,I)}$, either $\delta$ or $-\delta$ is represented by an effective divisor. Any class $\kappa \in H^2(X ; \mathbb{R})$ of a K\"ahler form for $(X,I)$ is orthogonal to $P_{(X,I)}$ and has positive inner product with the class of an effective divisor. Hence $\langle \kappa , \delta \rangle \neq 0$ for all $\delta \in \Delta_{(X,I)}$. Therefore $\kappa$ is in a connected component of
\[
\mathcal{K}_{(X,I)} = \{ u \in H^2(X ; \mathbb{R}) \; | \; u \perp P_{(X,I)}, \; \; \langle u , u \rangle > 0, \; \; \langle u , \delta \rangle \neq 0 \text{ for all } \delta \in \Delta_{(X,I)} \}.
\]
The set of K\"ahler classes for $(X,I)$ is convex, hence connected and therefore lies in a distinguished connected component of $\mathcal{K}_{(X,I)}$. We call this component the {\em K\"ahler chamber} of $(X,I)$. The theorem of Burns-Rapoport \cite{bura} states that if $X,X'$ are two $K3$ surfaces and $\phi : H^2(X ; \mathbb{Z}) \to H^2(X' ; \mathbb{Z})$ is an isometry sending $H^{2,0}(X)$ to $H^{2,0}(X')$ and sending the K\"ahler chamber of $X$ to the K\"ahler chamber of $X'$, then there is a unique isomorphism $f : X \to X'$ inducing $\phi$.

Now let $g$ be an Einstein metric on $X$. It is known that any such metric is in fact hyperk\"ahler \cite[Chapter 12. K]{bes}. Let $I,J,K$ be a hyperk\"ahler triple of complex structures for $g$ and $\omega_I, \omega_J, \omega_K$ the corresponding K\"ahler forms. Then $\{ \omega_I , \omega_J , \omega_K \}$ defines an oriented basis for $H^+_g(X)$. Since the hyperk\"ahler triple $(I,J,K)$ is determined by $g$ up to an $SO(3)$ transformation, it follows that the orientation induced on $H^+_g(X)$ depends only on the metric $g$.

\begin{lemma}\label{lem:orient}
Let $g,g'$ be two Einstein metrics on $X$ such that $H^+_g(X) = H^+_{g'}(X)$ as unoriented subspaces of $H^2(X ; \mathbb{R})$. Then $g$ and $g'$ induce the same orientation on $H^+_g(X)$.
\end{lemma}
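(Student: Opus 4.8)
The plan is to argue by contradiction. Suppose $g,g'$ are Einstein metrics with $H^+_g(X)=H^+_{g'}(X)=V$ as unoriented subspaces but inducing \emph{opposite} orientations on $V$. Since rescaling a metric changes neither $H^+$ nor its orientation, I may assume $g$ and $g'$ both have volume $1$; then (as recalled above) $g,g'$ are hyperk\"ahler, and for each of the two metrics the cohomology classes of the K\"ahler forms of its compatible complex structures sweep out the same sphere $S=\{u\in V:\langle u,u\rangle=2\}$. Fix $\kappa\in S$. After rotating the hyperk\"ahler triples I obtain a triple $(I,J,K)$ for $g$ with $[\omega_I]=\kappa$, and a triple $(I',J',K')$ for $g'$ with $[\omega_{I'}]=\kappa$. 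Then $([\omega_J],[\omega_K])$ and $([\omega_{J'}],[\omega_{K'}])$ are orthogonal bases of the $2$-plane $P:=\kappa^\perp\cap V$, and the hyperk\"ahler orientations of $V$ are $([\omega_I],[\omega_J],[\omega_K])$ and $([\omega_{I'}],[\omega_{J'}],[\omega_{K'}])$ respectively.

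The next step is to read the orientation hypothesis complex-geometrically. Since $H^{2,0}(X,I)=\mathbb{C}(\omega_J+i\omega_K)$, the orientation $([\omega_J],[\omega_K])$ of $P$ is precisely the one singling out $H^{2,0}(X,I)$ among the two isotropic lines of $P\otimes\mathbb{C}$, the other being its complex conjugate; likewise for $I'$. Because $[\omega_I]=[\omega_{I'}]=\kappa$, the assumption that $g$ and $g'$ induce opposite orientations on $V$ is equivalent to $([\omega_J],[\omega_K])$ and $([\omega_{J'}],[\omega_{K'}])$ being opposite orientations of $P$, i.e.\ to $H^{2,0}(X,I')=\overline{H^{2,0}(X,I)}=H^{2,0}(X,-I)$, where $-I$ is the conjugate complex structure. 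Note also $P_{(X,I')}=P_{(X,-I)}=P$ as unoriented planes, so $\mathcal{K}_{(X,I')}=\mathcal{K}_{(X,-I)}$ and the two surfaces have the same chamber structure; $\kappa=[\omega_{I'}]$ lies in the K\"ahler chamber of $(X,I')$, whereas the K\"ahler cone of $(X,-I)$ is the negative of that of $(X,I)$, so $-\kappa$ lies in the K\"ahler chamber of $(X,-I)$. Thus the K\"ahler chambers of $(X,I')$ and $(X,-I)$ are negatives of one another.

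The key step is to apply the Burns--Rapoport theorem with the isometry $\phi=-\mathrm{id}_L$. It carries $H^{2,0}(X,I')=H^{2,0}(X,-I)$ to itself, and carries the K\"ahler chamber of $(X,I')$ to its negative, which is the K\"ahler chamber of $(X,-I)$. Hence there is a biholomorphism $h\colon(X,I')\to(X,-I)$ acting by $-\mathrm{id}$ on $H^2(X;\mathbb{Z})$. But $h$ is a diffeomorphism of $X$, so $-\mathrm{id}=p([h])\in\Gamma$, whereas $-\mathrm{id}$ reverses the orientation of every positive-definite $3$-plane in $H^2(X;\mathbb{R})$ (it acts there with determinant $-1$) and therefore does not lie in $\Gamma$. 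This contradiction completes the proof.

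I expect the part requiring the most care to be the bookkeeping in the second step: matching the hyperk\"ahler orientation of $V$ against the Hodge-theoretic orientation of $P$ and its conjugate, and correctly tracking the sign change $[\omega_{-I}]=-[\omega_I]$ that makes the K\"ahler chambers of $(X,I')$ and $(X,-I)$ opposite rather than equal --- this is exactly what forces $\phi=-\mathrm{id}$, rather than the identity, to be the isometry fed into Burns--Rapoport. Once this is arranged, the contradiction with the characterization of $\Gamma$ as the subgroup of $Aut(L)$ preserving the orientation of $H^+$ is immediate.
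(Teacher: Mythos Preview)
Your proof is correct and follows essentially the same route as the paper: set up hyperk\"ahler triples so that Burns--Rapoport forces $-\mathrm{id}_L$ to be induced by a diffeomorphism, contradicting the characterization of $\Gamma$. The only difference is bookkeeping---the paper aligns the triples so that $[\omega_J]=[\omega_{J'}]$, $[\omega_K]=[\omega_{K'}]$, $[\omega_I]=-[\omega_{I'}]$ (hence $H^{2,0}(X,I)=H^{2,0}(X,I')$ directly, avoiding your detour through the conjugate structure $-I$), and it also rotates so that $\Delta_{(X,I)}=\emptyset$, which reduces $\mathcal{K}_{(X,I)}$ to exactly two chambers and streamlines the chamber comparison.
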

\begin{proof}
Without loss of generality assume $g,g'$ both have unit volume. Suppose that $H^+_g(X) = H^+_{g'}(X)$, but that $g,g'$ induce opposite orientations. We will derive a contradiction. Let $(I,J,K)$ be a hyperk\"ahler triple for $g$ with corresponding K\"ahler forms $\omega_I, \omega_J, \omega_K$. Consider the complex $K3$ surface given by $(X,I)$. Then $H^{2,0}(X,I)$ is spanned by $\omega_J + i\omega_K$ and the corresponding $2$-plane $P_{(X,I)}$ is spanned by $\omega_J, \omega_K$. By rotating $(I,J,K)$ if necessary, we can assume that each $\delta \in H^2(X ; \mathbb{Z}))$ with $\langle \delta , \delta \rangle = -2$ is not orthogonal to $P_{(X,I)}$. Hence $\Delta_{(X,I)}$ is empty. Then since $P_{(X,I)}^\perp$ has signature $(1,19)$ it follows that
\[
\mathcal{K}_{(X,I)} = \{ u \in P_{(X,I)}^\perp \; | \; \langle u , u \rangle > 0 \}
\]
has exactly two connected components, which we denote as $\mathcal{K}_{(X,I)}^+ , \mathcal{K}_{(X,I)}^-$. Let us assume that $\mathcal{K}^+_{(X,I)}$ is the K\"ahler chamber for $(X,I)$.

Now let $(I',J',K')$ denote a hyperk\"ahler triple for $g'$. Since $H^+_g(X) = H^+_{g'}(X)$ but with the opposite orientation, it follows that $(I',J',K')$ can be chosen so that
\[
[ \omega_I] = -[\omega_{I'}], \quad [ \omega_J] = [\omega_{J'}], \quad [ \omega_K] = [\omega_{K'}],
\]
where $[ \; . \; ]$ denotes the underlying cohomology class. It follows that $H^{2,0}(X,I) = H^{2,0}(X,I')$, $\mathcal{K}_{(X,I)} = \mathcal{K}_{(X,I')}$ and that the K\"ahler component of $(X,I')$ is $\mathcal{K}_{(X,I)}^-$.

Let $\phi : H^2(X ; \mathbb{Z}) \to H^2(X ; \mathbb{Z})$ be the isometry $\phi(x) = -x$. Then $\phi$ sends $H^{2,0}(X,I)$ to $H^{2,0}(X,I')$ and sends $\mathcal{K}_{(X,I)}^+$ to $\mathcal{K}_{(X,I)}^-$, so by the theorem of Burns-Rapoport, $\phi$ is induced by a diffeomorphism $f : X \to X$. But this would mean that $f$ is a diffeomorphism which reverses orientation on $H^+(X)$, which is known to be impossible. Hence $g,g'$ must induce the same orientation on $H^+_g(X)$.
\end{proof}

\begin{proof}[Proof of Theorem \ref{thm:section}]
The proof is an adaptation of an argument given in \cite{gkt} (see also \cite{gia}). Let $Ein$ denote the space of all Einstein metrics on $X$ with the $\mathcal{C}^\infty$ topology and with unit volume. We have that $Diff(X)$ acts on $Ein$ by pullback. Let $TDiff(X)$ denote the subgroup of $Diff(X)$ acting trivially on $H^2(X ; \mathbb{Z})$. So we have a short exact sequence
\[
1 \to TDiff(X) \to Diff(X) \to \Gamma \to 1.
\]
It is a consequence of the global Torelli theorem for $K3$ surfaces that $TDiff(X)$ acts freely (and properly) on $Ein$ (see \cite[Lemma 4.4, Lemma 4.5]{gia}). Let
\[
T_{Ein} = Ein/TDiff(X)
\]
be the quotient ($T_{Ein}$ is an analogue of Teichm\"uller space for $K3$ surfaces). Let $Gr_3( \mathbb{R}^{3,19})$ denote the Grassmannian of positive definite $3$-planes in $\mathbb{R}^{3,19}$. There is a period map
\[
P : T_{Ein} \to Gr_3( \mathbb{R}^{3,19}).
\]
Defined as follows. Fix an isometry $H^2(X ; \mathbb{R}) \cong \mathbb{R}^{3,19}$. Then $P$ sends an Einstein metric $g$ the $3$-plane $H^+_g(X)$. Let
\[
\Delta = \{ \delta \in H^2(X ; \mathbb{Z}) \; | \; \delta^2 = -2 \}
\]
and set
\[
W = \{ \tau \in Gr_3(\mathbb{R}^{3,19}) \; | \; \tau^\perp \cap \Delta = \emptyset \}.
\]
The Grassmannian $Gr_3(\mathbb{R}^{3,19})$ is a contractible manifold of dimension $3 \cdot 19 = 57$. For each $\delta \in \Delta$, the subset
\[
A_\delta = \{ \tau \in Gr_3(\mathbb{R}^{3,19}) \; | \; \delta \in \tau^\perp \}
\]
is a codimension $3$ embedded submanifold. Then $W = Gr_3(\mathbb{R}^{3,19}) \setminus \bigcup_{\delta \in \Delta} A_\delta$. A transversality argument implies that $W$ is connected and simply-connected. To be more precise, let $\gamma : S^1 \to W$ be a loop in $W$ based at some point $x_0 \in W$. Since $Gr_3(\mathbb{R}^{3,19})$ is a simply-connected smooth manifold, there exists a smooth homotopy $\gamma_t$ of loops based at $x_0$ from $\gamma_0 = \gamma$ to the constant loop $\gamma_1 = x_0$. By \cite[Theorem 2.5]{hir}, we can assume $\gamma_t$ is transverse to each of the countably many submanifolds $\{ A_\delta \}_{\delta \in \Delta}$. But this means the image of $\gamma_t$ is disjoint from the $A_\delta$. Hence $\gamma$ is contractible as a based loop in $W$.

It can be shown that the period map $P$ takes values in $W$ and moreover the global Torelli theorem implies that the period map $P : T_{Ein} \to W$ is a homeomorphism. More precisely, $P$ is a local homeomorphism by the local Torelli theorem, surjectivity of $P$ follows from \cite{loo} and injectivity from the discussion given in \cite[Chapter 12. K]{bes}. Note that in \cite[Chapter 12. K]{bes}, the period map is taken with values in $Gr_3^+(\mathbb{R}^{3,19})$, the Grassmannian of positive oriented $3$-planes in $\mathbb{R}^{3,19}$. However, because of Lemma \ref{lem:orient}, this distinction is irrelevant. Let 

\begin{equation}\label{equ:mein}
\mathcal{M}_{Ein} = Ein \times_{Diff(X)} EDiff(X)
\end{equation}
be the homotopy quotient (this is a slight variant of the moduli space $\mathcal{M}_{Ein}$ defined in \cite{gia}). Since $TDiff(X)$ acts freely and properly on $Ein$, one finds that
\[
\mathcal{M}_{Ein} = T_{Ein} \times_{\Gamma} E\Gamma.
\]
We have seem that $T_{Ein}$ is homeomorphic to $W$. Hence $T_{Ein}$ is connected and simply-connected. By the long exact sequence of homotopy groups associated to the fibration $T_{Ein} \to \mathcal{M}_{Ein} \to B\Gamma$, we get
\[
1 = \pi_1(T_{Ein}) \to \pi_1(\mathcal{M}_{Ein}) \to \pi_1(B\Gamma) \to \pi_0(T_{Ein}) = 1.
\]
Hence the natural projection map $\mathcal{M}_{Ein} \to B\Gamma$ induces an isomorphism $\pi_1(\mathcal{M}_{Ein}) \to \pi_1(B\Gamma) = \Gamma$. But from Equation (\ref{equ:mein}), it follows that the map $\mathcal{M}_{Ein} \to B\Gamma$ factors as
\[
\mathcal{M}_{Ein} \to BDiff(X) \to B\Gamma.
\]
It follows that the induced map $s : \pi_1(\mathcal{M}_{Ein}) \to \pi_1(BDiff(X)) = \pi_0(Diff(X)) = Mod(X)$ is a splitting of $p: Mod(X) \to \Gamma$.
\end{proof}

\section{Proof of Theorem \ref{thm:nielsen}}

Recall that $X = K3$ is homeomorphic to $3(S^2 \times S^2) \# 2(-E_8)$, where $-E_8$ denotes the compact, simply-connected topological $4$-manifold with intersection form minus the $E_8$-lattice. We construct a continuous involution on $X$ as follows. Let $f_0 : S^2 \times S^2 \to S^2 \times S^2$ be given by
\[
f(x,y) = (y , x).
\]
Note that $f$ has fixed points. Thus we can form the equivariant connected sum $3(S^2 \times S^2)$, summing together three copies of $(S^2 \times S^2 , f_0)$. Now by attaching two copies of $-E_8$ we obtain a continuous involution $f : X \to X$. Let $\phi \in Aut(L)$ denote the isometry induced by $f$. One easily checks that $\phi \in \Gamma$. Set $\tilde{\phi} = s(\phi) \in Mod(X)$, where $s : \Gamma \to Mod(X)$ is the section of Theorem \ref{thm:section}. Then $\tilde{\phi}$ generates a subgroup of $Mod(X)$ of order $2$.

Theorem \ref{thm:nielsen} is an immediate consequence of the following:
\begin{theorem}
Let $g \in Diff(X)$ be a lift of $\tilde{\phi}$ to a diffeomorphism. Then $g$ is not an involution.
\end{theorem}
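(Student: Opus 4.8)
The plan is to assume $g$ is an involution and to derive a contradiction by confronting the homological constraints on its fixed-point set (from classical fixed-point formulas) with the adjunction inequality coming from Seiberg-Witten theory.

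First, set up the fixed-point data. Since $g$ is a smooth involution, $\mathbb{Z}/2 = \langle g\rangle$ acts smoothly on $X$, so $X^g$ is a disjoint union of finitely many closed embedded surfaces $\Sigma_1,\dots,\Sigma_k$ and finitely many isolated points. Because $g$ acts on $H^*(X)$ through $\phi=p(\tilde{\phi})$, the Lefschetz fixed-point theorem gives $\chi(X^g)=2+\operatorname{tr}\!\bigl(\phi\mid H^2(X;\mathbb{Q})\bigr)$, and the $G$-signature theorem (whose isolated-fixed-point terms vanish for an involution) gives $\operatorname{sign}(g,X)=\sum_j[\Sigma_j]^2$, where the left side is the difference of the signatures of the $(\pm1)$-eigenlattices of $\phi$ on $H^2(X;\mathbb{R})$. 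I would write both quantities out explicitly from the way $\phi$ permutes the summands of $L=H^2(X;\mathbb{Z})$ (the swap on the hyperbolic pieces coming from the $S^2\times S^2$ factors, together with the induced action on the two copies of $-E_8$), and I would also record the Smith–Borel inequality $\sum_i\dim_{\mathbb{F}_2}H^i(X^g;\mathbb{F}_2)\le\sum_i\dim_{\mathbb{F}_2}H^i(X;\mathbb{F}_2)=24$, which translates into an upper bound on the total genus of $X^g$ once combined with the Euler characteristic computed above.

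Second, bring in Seiberg-Witten theory. The $K3$ surface has trivial canonical class, and the $\mathrm{Spin}^c$ structure with $c_1=0$ has Seiberg-Witten invariant $\pm1$; hence the adjunction inequality applies to every embedded surface in $X$. Since $K3$ is minimal with even intersection form, no homologically essential embedded $2$-sphere has non-negative self-intersection, so each $2$-sphere component of $X^g$ contributes at most $0$ to $\sum_j[\Sigma_j]^2$; and each component $\Sigma_j$ of genus $\ge1$ satisfies $[\Sigma_j]^2\le 2g(\Sigma_j)-2$. Thus the value of $\operatorname{sign}(g,X)$ forces a lower bound on the total genus of the higher-genus part of $X^g$, and this must be reconciled with the upper bound from Smith–Borel and with the Euler-characteristic identity relating the genera, the number of sphere components, and the number of isolated fixed points. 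The point is that these are incompatible.

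I expect the main obstacle to be exactly this last reconciliation: one must check that the ``genus budget'' demanded by $\operatorname{sign}(g,X)$ through adjunction genuinely exceeds what Lefschetz together with Smith–Borel permit, and this depends on pinning down $\phi$ and the sign and size of $\operatorname{sign}(g,X)$ precisely. If the elementary package of Lefschetz, $G$-signature, adjunction and Smith–Borel should leave a gap, the two natural reinforcements are: (i) the $G$-spin index theorem for the Dirac operator on the spin manifold $X$, whose isolated-fixed-point contributions occur in quarter-integer units and hence constrain the number of isolated fixed points modulo a small integer; and (ii) Theorem~\ref{thm:section} itself — since $\tilde{\phi}=s(\phi)$ lies in the image of $\pi_1(\mathcal{M}_{Ein})$, the mapping torus of $g$ carries a fibrewise Einstein (hence hyperkähler) metric, which one can try to upgrade to a $g$-invariant Einstein metric, realising $g$ as a holomorphic symplectic or anti-symplectic involution of a complex $K3$ surface and contradicting Nikulin's classification, whose invariant lattices do not match that of $\phi$.
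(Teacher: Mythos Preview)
Your strategy is on the right track---fixed-point formulas plus the adjunction inequality is exactly how the paper proceeds---but the package you assemble does not close. With $\operatorname{tr}(\phi\mid H^2)=0$ one gets $\chi(X^g)=2$, and $\operatorname{sign}(g,X)=6$, so $\sum_j[\Sigma_j]^2=6$ as you say. However, the configuration consisting of $p=8$ isolated fixed points together with a single fixed surface of genus $4$ and self-intersection $6$ satisfies Lefschetz, the $G$-signature identity, Smith--Borel ($\dim_{\mathbb{F}_2}H^*(X^g)=18\le 24$), \emph{and} adjunction ($6\le 2\cdot 4-2$). Even if you impose $p=0$ by hand, the configuration of four null-homologous fixed spheres and one genus-$4$ surface with self-intersection $6$ still passes all of your tests. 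So the ``genus budget'' argument, as written, cannot produce a contradiction.

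What the paper uses, and what is missing from your outline, is the even/odd dichotomy for smooth involutions on a simply-connected spin $4$-manifold: $g$ either lifts to an involution of the spinor bundle (even type, fixed set consists of isolated points) or it does not (odd type, fixed set consists only of surfaces). One checks from $\sigma(X)^{\mathbb{Z}_2}=-5$ that $g$ is odd, so there are no isolated fixed points at all. Then the paper invokes Edmonds' formulas $t=2k-2$ and $c=2\sum_i g_i$ for the trivial and cyclotomic pieces of the $\mathbb{Z}[\mathbb{Z}/2]$-module $H^2(X;\mathbb{Z})$; these are strictly finer than Lefschetz (which only sees $t-c$). Since $\phi$ has $(t,c,r)=(0,0,11)$, this forces $k=1$ and $g_1=0$, and then the $G$-signature formula gives $[\Sigma_1]^2=6$, contradicting adjunction. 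Your backup (i), the $G$-spin index, is morally the right reinforcement, since the even/odd distinction is a spin phenomenon; but you would need to use it to exclude isolated points outright, not merely to constrain their number modulo something. Your backup (ii) is not available: the section $s$ of Theorem~\ref{thm:section} is defined at the level of $\pi_1(\mathcal{M}_{Ein})\to Mod(X)$ and does not produce an Einstein metric invariant under any chosen diffeomorphism lift $g$ of $\tilde{\phi}$.
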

\begin{proof}
We assume $g$ is an involution and derive a contradiction. We will make use of results and terminology of \cite{ed}. The action of $g$ on $H^2(X ; \mathbb{Z})$ can be decomposed into three types: trivial, cyclotomic and regular. Let $(t,c,r)$ denote the number of each such summand. Hence
\[
t + c + 2r = b_2(X) = 22.
\]
The action of $g$ on $H^2(X ; \mathbb{Z})$ is given by $\phi$. A simple calculation shows that
\[
(t,c,r) = (0,0,11).
\]
It follows that $g$ does not act freely on $X$ (a free involution would have type $(0,2,r)$).

Recall that an involution is called {\em even} if it lifts to an involution on the spin bundle and called {\em odd} otherwise. We claim that $g$ is odd. In fact, from the definition of $\phi$, one easily computes that
\[
b_+^{\mathbb{Z}_2}(X) = 3, \quad b_-^{\mathbb{Z}_2}(X) = 8, \quad \sigma(X)^{\mathbb{Z}_2} = -5.
\]
If $g$ were even, the $G$-signature theorem would give $\sigma(X)^{\mathbb{Z}_2} = -8$, a contradiction. So $g$ is odd.

Since $g$ is odd, the fixed point set consists of embedded surfaces $\Sigma_1, \dots , \Sigma_k$, $k>0$, where each $\Sigma_i$ is oriented (since $X$ is spin). Let $\Sigma_i$ have genus $g_i$. Then from \cite{ed},
\[
t = 2k-2, \quad c = 2(g_1+ \cdots + g_k).
\]
But $t=0$ and $c=0$, so $k = 1$ and $g_1 = 0$. The $G$-signature theorem implies
\[
\sigma(X)^{\mathbb{Z}_2} = \frac{1}{2} \sigma(X) + \frac{1}{2}( [\Sigma_1]^2 )
\]
hence
\[
-5 = -8 + \frac{1}{2}[\Sigma_1]^2
\]
and so
\[
[\Sigma_1]^2 = 6.
\]
But this contradicts the adjunction inequality (see \cite[Theorem 11]{law}), since $\Sigma_1$ has genus $0$. So such a $g$ can not exist.
\end{proof}

\begin{remark}
It is interesting to note that the only place in which we used that $g$ was smooth, and not just locally linear, is in the adjunction inequality.
\end{remark}

\section{Constructing families over $T^2$}\label{sec:constructing}

In this section we describe how families (continuous or smooth) of $K3$ surfaces can be constructed over the $2$-torus $B = T^2$.
\begin{definition}
By a {\em continuous family of $K3$ surfaces over $B$}, we mean a topological fibre bundle $\pi : E \to B$ with fibres homeomorphic to $K3$. Thus $E$ is the associated fibre bundle of a principal $Homeo(X)$-bundle. We say that a continuous family $E \to B$ is {\em smoothable} with fibres diffeomorphic to $X$, if the underlying principal $Homeo(X)$-bundle can be reduced to a principal $Diff(X)$-bundle.
\end{definition}

As explained in \cite[\textsection 4.2]{bako}, it follows from a result of M\"uller-Wockel \cite{muwo} that $E$ is smoothable with fibres diffeomorphic to $X$ if and only if $E$ admits the structure of a smooth manifold such that $\pi : E \to B$ is a submersion and the fibres of $E$ with their induced smooth structure are diffeomorphic to $X$.

We are interested in studying principal $G$-bundles on $T^2$, where $G = Homeo(X)$ or $G = Diff(X)$. Regard $T^2$ as a CW complex with two $1$-cells and a single $2$-cell. The $1$-skeleton is a wedge of two circles. A principal $G$-bundle $P \to B$ over $B$ can be constructed in two steps:
\begin{itemize}
\item{Construct $P$ over the $1$-skeleton. For each loop, we need an element of $\pi_0(G)$, which describes how to identify the fibres of $P$ over the endpoints of the $1$-cell being attached. Let $x_1,x_2 \in \pi_0(G)$ denote these elements.}
\item{Extend $P$ over the $2$-cell. For this we need $P$ to be trivial over the boundary of the $2$-cell. The attaching map of the $2$-cell is the commutator map. In other words, $P$ can be extended if and only if $x_1,x_2$ commute as elements of $\pi_0(G)$.}
\end{itemize}

From these remarks and obstruction theory, we conclude the following:
\begin{itemize}
\item{Let $P$ be a principal $G$-bundle over $T^2$. The restriction of $P$ to the $1$-skeleton of $T^2$ defines elements $x_1,x_2 \in \pi_0(G)$. If $P'$ is a second principal $G$-bundle over $T^2$ with corresponding elements $x'_1,x'_2 \in \pi_0(G)$, then a necessary condition for $P,P'$ to be isomorphic is that $x_i = x'_i$ for $i=1,2$.}
\item{Suppose $P,P'$ are two principal $G$-bundles over $T^2$ and $x_i = x'_i$ for $i=1,2$. Then there is a difference obstruction in $H^2( T^2 ; \pi_1(G))$ which is the obstruction to extending an isomorphism $P \to P'$ over the $2$-cell.}
\end{itemize}
Note that the group $H^2( T^2 ; \pi_1(G))$ is not necessarily isomorphic to $\pi_1(G)$, because we have to consider $\pi_1(G)$ as a local system on $T^2$. However $H^2( T^2 ; \pi_1(G))$ is easily seen to be isomorphic to a quotient of $\pi_1(G)$ (use the cellular model for cohomology).

\section{Proof of Theorem \ref{thm:nonsmooth}}

In \cite[\textsection 4.2]{bako} we construct commuting homeomorphisms $f_1, f_2 : X \to X$. Let $E \to T^2$ be the mapping torus. This defines a continuous $K3$ family over $T^2$ or equivalently a principal $Homeo(X)$-bundle over $T^2$. Moreover it is shown that this family is not smoothable \cite[Theorem 4.24]{bako}.

For $i=1,2$, let $\rho_i = (f_i)_* \in Aut(L)$ denote the induced automorphisms of $M$. Then it is easily verified that $\rho_1,\rho_2 \in \Gamma$. For $i=1,2$, let $g_i = s(\rho_i) \in Mod(X)$. Choose actual diffeomorphisms $\tilde{g}_1, \tilde{g}_2$ representing $g_1,g_2$. Then $\tilde{g}_1, \tilde{g}_2$ commute up to a smooth isotopy. Choose one such smooth isotopy. From this data we can construct a smoothable family over $T^2$ and a corresponding principal $Diff(X)$ bundle in exactly the manner described in Section \ref{sec:constructing}. Let $E' \to T^2$ denote the underlying continuous family. Then:
\begin{itemize}
\item{$E$ and $E'$ are isomorphic over the $1$-skeleton of $T^2$. This is because $g_1,g_2$ and $f_1,f_2$ induce the same elements of $Aut(L) = \pi_0( Homeo(X))$.}
\item{$E$ and $E'$ are not isomorphic over $T^2$ because $E'$ is smoothable but $E$ is non-smoothable.}
\end{itemize}
Thus $E$ and $E'$ differ by a non-trivial element $\mathcal{O} \in H^2( T^2 ; \pi_1(Homeo(X)))$. Let
\[
Q = Homeo(X) \times_{Diff(X)} EDiff(X)
\]
be the homotopy quotient. Theorem \ref{thm:nonsmooth} will follow if we can show that image of $\mathcal{O}$ under the natural map
\begin{equation}\label{equ:natmap}
H^2( T^2 ; \pi_1(Homeo(X)) ) \to H^2( T^2 ; \pi_1(Q))
\end{equation}
is non-zero. But note that $Q$ can be identified with the homotopy fibre of $BDiff(X) \to BHomeo(X)$:
\[
Q \to BDiff(X) \to BHomeo(X).
\]
By obstruction theory, there are a sequence of obstructions to smoothing $E$ which take values in $H^j( T^2 ; \pi_{j-1}(Q) )$. Since $E$ is smoothable on the $1$-skeleton of $B$, the non-smoothability of $E$ means that the obstruction in $H^2( T^2 ; \pi_1(Q))$ is non-trivial. The obstruction class is easily seen to be the image of $\mathcal{O}$ under the natural map (\ref{equ:natmap}). Therefore, the image of $\mathcal{O}$ under this map is non-zero.


\bibliographystyle{amsplain}

\end{document}